\theoremstyle{plain}
\newtheorem{theorem}{Theorem}
\newtheorem{lemma}[theorem]{Lemma}
\newtheorem{cor}[theorem]{Corollary}
\newtheorem*{theorem*}{Theorem}
\theoremstyle{definition}
\newtheorem{rem}[theorem]{Remark}
\newtheorem{example}[theorem]{Example}
\newcommand{\R}{{\mathbb{R}}}
\newcommand{\C}{{\mathbb{C}}}
\newcommand{\cL}{{\mathcal{L}}}
\def\id{{1\hskip-2.5pt{\rm l}}}
\newcommand{\rank}{{ \operatorname{rank}}}
\newcommand{\Sp}{{ \operatorname{Sp}}}
\newcommand{\Span}{{ \operatorname{span}}}
\definecolor{lev}{rgb}{0.773,0.294,0.549}
\begin{document}

\title{Applications of Grothendieck's inequality to linear symplectic geometry}

\author{Efim Gluskin, Shira Tanny}

\maketitle

\begin{abstract}
      Recently in symplectic geometry there arose an interest in bounding various functionals on spaces of matrices. It appears that Grothendieck's theorems about factorization are a useful tool for proving such bounds. In this note we present two such applications. 
\end{abstract}

Linear symplectic geometry concerns a non-degenerate anti-symmetric bilinear form, called a {\it symplectic bilinear form}, and matrices that preserve this form, called {\it symplectic matrices}. Note that such forms exist only on even dimensional linear spaces. A standard example for this setting is the form defined on $\R^{2n}$ by $(u,v)\mapsto \left<u, Jv\right>$ where $J$ (or $J_{2n}$) is the $2n\times 2n$ matrix corresponding to multiplication by $i$ under the identification $\R^{2n}\cong \C^n$, 
\begin{equation*}
J:=\left(\begin{array}{cc}
0 &-\id_{n\times n} \\\id_{n\times n} & 0 \end{array}\right).
\end{equation*}
A matrix $S$ preserves this bilinear form if and only if it satisfies the following relation: 
\begin{equation}\label{eq:symp_matrix}
S^T JS = J.
\end{equation} 
The space of such matrices is denoted by $\Sp(2n)$. 

More generally, given an even dimensional Euclidean vector space $E$, we denote by $J$ or $J_E$ a linear orthogonal transformation whose square equals to minus the identity on $E$, $J^2=-\id_E$. 
In this case, matrices satisfying the relation (\ref{eq:symp_matrix}) with respect to $J_E$ preserve the form  $(u,v)\mapsto \left<u, J_E v\right>$ and the space of such matrices is denoted by $\Sp(E;J_E)$.

In \cite[Appendix A]{buhovsky2017poisson}, Buhovsky, Logunov and Tanny proved that there exists a constant $c(n)$, depending on the dimension, such that for any finite collection of vectors $v_1,\dots, v_N\in \R^{2n}$,
\begin{equation}\label{eq:BLT_ineq}
\sum_{i,j=1}^N |\left<v_i, Jv_j\right>|\leq c(n)\cdot \max_{|s_i|,|t_j|\leq 1}\left<\sum_{i=1}^N t_iv_i, J\sum_{j=1}^N s_j v_j\right>,
\end{equation}
where the constant $c(n)$ grows exponentially in $n$. 
In this paper it is shown that, using Grothendieck's inequality \cite{grothendieck1956resume}, the growth of the constant $c(n)$ in the above inequality can be improved to be $\sqrt n$. This result is stated in Corollary~\ref{cor:symp_ineq} below, and in Theorem~\ref{thm:main_ineq} in a more general setting. Example~\ref{exa:sqrt_sharp} shows that the growth of $c(n) \propto \sqrt{n}$ is sharp. The second main result concerns the orbit of a finite collection of vectors under the action of the group of symplectic matrices. Grothendieck's inequality can be used to prove a sharp upper-bound for the minimal sum of norms of given vectors under the action of symplectic matrices, as stated in Theorem~\ref{thm:norm_under_Sp} below. 

We use some basic facts and notations from operator theory. Denote by $\ell_p^N$ the space $\R^N$ equipped with the norm:  
\begin{eqnarray*}
\|u\|_{\ell_p^N} &:=& \left(\sum_{i=1}^{N} |u_i|^p\right)^\frac{1}{p}, \quad 1\leq p<\infty\\
\|u\|_{\ell_\infty^N} &:=& \max_{1\leq i\leq N} |u_i|,
\end{eqnarray*}
for $u=(u_1,\dots,u_N)\in \R^N$.
The space of linear operators from $\R^n$ to $\R^m$ is denoted by $\cL(\R^n,\R^m)$. We identify an operator $A\in\cL(\R^n,\R^m)$ with its matrix $A=(a_{ij})_{i=1}^m,_{j=1}^n$. For such a matrix, one denotes by $A^T\in\cL(\R^m, \R^n)$ its transpose. For a vector $\Lambda=(\lambda_1,\dots,\lambda_n)\in\R^n$ we denote by $D_\Lambda$ the diagonal matrix corresponding to $\Lambda$, namely, $(D_\Lambda)_{ii}=\lambda_i$ and $(D_\Lambda)_{ij}=0$ for $i\neq j$.

For $1\leq p,q\leq\infty$ one denotes by $\cL(\ell_p^n,\ell_q^m)$ the linear space 	$\cL(\R^n, \R^m)$ equipped with the operator norm from $\ell_p^n$ to $\ell_q^m$:
\begin{equation*}
\|A\|_{\cL(\ell_p^n,\ell_q^m)}=\max_{\|u\|_{\ell_p^n} =1} \|Au\|_{\ell_q^m}.
\end{equation*}
For $A\in\cL(\R^n,\R^m)$ let $\lambda_0\geq\lambda_1 \geq\cdots\geq\lambda_{n-1}$ be the sequence of all eigenvalues $\lambda_j = \lambda_j(A^T A)$ of the operator $A^TA$ with multiplicities. The $j$-singular value of $A$ is defined as
\begin{equation*}
s_j(A) :=\begin{cases}\sqrt{\lambda_j(A^TA)}, &j=0,\dots,n-1,\\
				0, &  j\geq n.\end{cases}
\end{equation*}
Recall that $s_0(A)$ coincides with the operator norm $\|A\|_{\cL(\ell_2^n,\ell_2^m)}$.
The Hilbert-Schmidt norm of $A\in \cL(\R^n, \R^m)$ is defined as
\begin{equation}
\|A\|_{HS} := \sqrt{\sum_{j=0}^\infty s_j(A)^2}.
\end{equation}
If $(a_{ij})_{i=1}^m,_{j=1}^n$ is the matrix representing $A$ then
\begin{equation}
\|A\|_{HS} = \sqrt{\sum_{i=1}^m\sum_{j=1}^n a_{ij}^2}.
\end{equation}

For an operator $A\in\cL(\R^n,\R^m)$ such that $\rank(A) =k$, one has $s_j(A)=0$ for all $j\geq k$. Consequently,
\begin{equation}\label{eq:norms_rk_ineq}
\|A\|_{HS} = \sqrt{\sum_{j=0}^{k-1}s_j(A)^2}\leq \sqrt{k}\cdot s_0(A) =\sqrt{\rank(A)}\cdot \|A\|_{\cL(\ell_2^n,\ell_2^m)}
\end{equation}

The following reformulation of Grothendieck's theorem \cite{grothendieck1956resume} is a special case of Theorem 2.1 in \cite{pisier2012grothendieck}, where the compact sets $S$ and $T$ are finite.

\begin{theorem*}[see \cite{pisier2012grothendieck}]
	Let $(a_{ij})_{i=1}^m,_{j=1}^n$ be an $m\times n$ real matrix. Then, there exist vectors ${\Lambda_0}=(\lambda_1^0,\dots,\lambda_m^0)$ and $\Lambda_1=(\lambda_1^1,\dots,\lambda_n^1)$ with non-negative entries $\lambda_i^0\geq 0$, $\lambda_i^1\geq 0$ and Euclidean norms bounded by 1,  $\|\Lambda_0\|_{\ell_2^m}\leq 1$, $\|\Lambda_1\|_{\ell_2^n}\leq 1$,  and there exists an $m\times n$ matrix $B$ such that 	
	\begin{equation}
	A = D_{\Lambda_0} B D_{\Lambda_1}
	\end{equation}
	and 
	\begin{equation}
	\|B\|_{\cL(\ell_2^n,\ell_2^m)} \leq K\cdot \|A\|_{\cL(\ell_\infty^n, \ell_1^m)}
	\end{equation}
	where $K$ is an absolute constant.
\end{theorem*}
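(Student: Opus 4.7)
The plan is to derive this factorization statement from the classical bilinear form of Grothendieck's inequality via a Pietsch-type (Hahn--Banach) argument.

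The starting point is Grothendieck's inequality in its bilinear form: for an absolute constant $K$, any real $m\times n$ matrix $A=(a_{ij})$, and any vectors $x_1,\dots,x_m, y_1,\dots,y_n$ in a real Hilbert space,
\begin{equation*}
\left|\sum_{i,j} a_{ij}\langle x_i, y_j\rangle\right| \leq K\,\|A\|_{\cL(\ell_\infty^n,\ell_1^m)}\,\max_i\|x_i\|\,\max_j\|y_j\|.
\end{equation*}
I would take this as a black box; it is the form in which Grothendieck's theorem is most directly proved (e.g.\ by Krivine's argument).

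The main step is to extract from this bilinear inequality the existence of probability vectors $\mu\in\R^m$ and $\nu\in\R^n$ satisfying the Pietsch-type domination
\begin{equation*}
\Bigl|\sum_{i,j} a_{ij}\, s_j t_i\Bigr| \leq K\,\|A\|_{\cL(\ell_\infty^n,\ell_1^m)}\Bigl(\sum_j \nu_j s_j^2\Bigr)^{1/2}\Bigl(\sum_i \mu_i t_i^2\Bigr)^{1/2}\quad\text{for all } s\in\R^n,\, t\in\R^m.
\end{equation*}
The weight vectors in the statement are then $\lambda_j^1=\sqrt{\nu_j}$ and $\lambda_i^0=\sqrt{\mu_i}$, which automatically satisfy $\|\Lambda_0\|_{\ell_2^m},\|\Lambda_1\|_{\ell_2^n}\leq 1$. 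To produce such measures I would apply a Hahn--Banach separation in the space of pairs of probability vectors (equivalently a Ky Fan / Sion minimax), arguing that the nonexistence of a common dominating $(\mu,\nu)$ would yield a configuration of unit vectors in Hilbert space contradicting the bilinear inequality of the previous step. A standard trick for handling the non-concavity is the AM--GM linearisation $\sqrt{XY}=\inf_{\tau>0}(\tau X+\tau^{-1}Y)/2$, reducing the quadratic domination to a family of linear Pietsch dominations to which minimax applies cleanly.

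Finally, define $b_{ij}=a_{ij}/(\lambda_i^0\lambda_j^1)$ on the common support of the weights (and $0$ elsewhere; a short test-pair argument shows $a_{ij}$ vanishes whenever $\lambda_i^0\lambda_j^1=0$). Then $A=D_{\Lambda_0}B\,D_{\Lambda_1}$ by construction, and the substitution $s_j=s'_j/\lambda_j^1$, $t_i=t'_i/\lambda_i^0$ in the Pietsch domination yields
\begin{equation*}
|\langle Bs',t'\rangle|\leq K\,\|A\|_{\cL(\ell_\infty^n,\ell_1^m)}\,\|s'\|_{\ell_2^n}\|t'\|_{\ell_2^m},
\end{equation*}
i.e.\ the required operator norm estimate on $B$. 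The main obstacle is the Pietsch domination step: verifying that the relevant quadratic functional on probability vectors admits a minimax argument, and that the dual extremal configuration it produces in the contrapositive genuinely falls within the scope of Grothendieck's bilinear inequality.
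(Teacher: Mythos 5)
The paper offers no proof of this statement: it is imported verbatim (in the finite-set case) from Pisier's survey, so there is nothing internal to compare you against. Your outline is, in substance, the standard proof of that cited result --- bilinear Grothendieck plus a Pietsch-type domination obtained by Hahn--Banach/minimax, followed by the change of density $b_{ij}=a_{ij}/(\lambda_i^0\lambda_j^1)$ --- and it is correct. The one step you flag as the ``main obstacle'' does go through: after the AM--GM linearisation it suffices to check that every finite convex combination of the affine functionals
\begin{equation*}
(\mu,\nu)\ \longmapsto\ K\,\|A\|_{\cL(\ell_\infty^n,\ell_1^m)}\Bigl(\textstyle\sum_i \mu_i (t_i)^2+\sum_j \nu_j (s_j)^2\Bigr)-2\Bigl|\textstyle\sum_{i,j}a_{ij}s_jt_i\Bigr|
\end{equation*}
is nonnegative somewhere on the product of simplices; taking $\mu,\nu$ to be point masses where $\sum_k c_k (t^{(k)}_i)^2$ and $\sum_k c_k (s^{(k)}_j)^2$ are maximal and applying the vector-valued inequality to $x_i=(\sqrt{c_k}\,t^{(k)}_i)_k$, $y_j=(\varepsilon_k\sqrt{c_k}\,s^{(k)}_j)_k$ (with signs $\varepsilon_k$ absorbing the absolute values) yields exactly the required bound, so the dual configuration does fall within the scope of the bilinear inequality. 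The only remaining housekeeping is the support issue, which you already note: testing the domination against $t=e_i$ (resp.\ $s=e_j$) shows $a_{ij}=0$ whenever $\lambda_i^0\lambda_j^1=0$, so $B$ is well defined. Your plan would, if written out, reprove the black-boxed theorem rather than shorten the paper, but as a proof sketch it is sound.
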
 
\noindent The smallest value of the constant $K$ is called Grothendieck's constant and is denoted by $K_G$. Its exact value is still unknown, Grothendieck himself proved that $\pi/2\leq K_G\leq \sinh(\pi/2)$. The following consequence of Grothendieck's inequality is well known to the experts. 
\begin{lemma}\label{lem:our_GT}
	For any $A\in \cL(\R^n, \R^n)$ there exists a vector with strictly positive coordinates $\Lambda = (\lambda_1,\dots,\lambda_n)\in \R^n$, $\lambda_i>0$ for all $1\leq i\leq n$, such that $\|\Lambda\|_{\ell_2^n}\leq 1$ and 
	\begin{equation}\label{eq:our_GT}
	\|D_\Lambda^{-1}AD_\Lambda^{-1}\|_{\cL(\ell_2^n,\ell_2^n)}\leq 3K_G\cdot \|A\|_{\cL(\ell_\infty^n,\ell_1^n)}.
	\end{equation}
\end{lemma}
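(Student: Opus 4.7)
The plan is to deduce the lemma from Grothendieck's theorem stated above by symmetrizing the two diagonal factors $D_{\Lambda_0}$ and $D_{\Lambda_1}$ into a single diagonal $D_\Lambda$, paying only a small price in the constant. I would first apply the theorem to $A$ to produce non-negative vectors $\Lambda_0=(\lambda_1^0,\dots,\lambda_n^0)$ and $\Lambda_1=(\lambda_1^1,\dots,\lambda_n^1)$ with $\|\Lambda_0\|_{\ell_2^n},\|\Lambda_1\|_{\ell_2^n}\leq 1$ and a matrix $B$ with $A=D_{\Lambda_0}BD_{\Lambda_1}$ and $\|B\|_{\cL(\ell_2^n,\ell_2^n)}\leq K_G\|A\|_{\cL(\ell_\infty^n,\ell_1^n)}$. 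Setting $\mu_i:=\max(\lambda_i^0,\lambda_i^1)$ produces a non-negative vector $\mu\in\R^n$ with
\[
\|\mu\|_{\ell_2^n}^2=\sum_{i=1}^{n}\max(\lambda_i^0,\lambda_i^1)^2\leq \|\Lambda_0\|_{\ell_2^n}^2+\|\Lambda_1\|_{\ell_2^n}^2\leq 2,
\]
and such that, wherever $\mu_i>0$, both ratios $\lambda_i^0/\mu_i,\lambda_i^1/\mu_i$ lie in $[0,1]$.

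Next I would perturb $\mu$ to guarantee that all coordinates become strictly positive. Choose a strictly positive vector $\epsilon=(\epsilon_1,\dots,\epsilon_n)$ with $\|\epsilon\|_{\ell_2^n}\leq \delta$ for an arbitrarily small $\delta>0$ (for instance $\epsilon_i=\delta\cdot 2^{-i}$), and define $\lambda_i:=(\mu_i+\epsilon_i)/\sqrt{3}$. Then $\lambda_i>0$, and by the triangle inequality
\[
\|\Lambda\|_{\ell_2^n}\leq \frac{\|\mu\|_{\ell_2^n}+\|\epsilon\|_{\ell_2^n}}{\sqrt{3}}\leq \frac{\sqrt{2}+\delta}{\sqrt{3}}\leq 1
\]
as soon as $\delta\leq\sqrt{3}-\sqrt{2}$. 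Since $\mu_i+\epsilon_i\geq \mu_i\geq \max(\lambda_i^0,\lambda_i^1)$ for every $i$, the coordinate-wise quotients $\Lambda_0/(\mu+\epsilon)$ and $\Lambda_1/(\mu+\epsilon)$ define diagonal operators of $\cL(\ell_2^n,\ell_2^n)$-norm at most $1$. Rewriting the factorization,
\[
D_\Lambda^{-1}AD_\Lambda^{-1}=3\,D_{\mu+\epsilon}^{-1}D_{\Lambda_0}BD_{\Lambda_1}D_{\mu+\epsilon}^{-1}=3\,D_{\Lambda_0/(\mu+\epsilon)}\,B\,D_{\Lambda_1/(\mu+\epsilon)},
\]
and taking operator norms gives the desired bound $3K_G\|A\|_{\cL(\ell_\infty^n,\ell_1^n)}$.

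The only genuine obstacle is the strict-positivity requirement on $\Lambda$. The naive choice $\lambda_i=\max(\lambda_i^0,\lambda_i^1)/\sqrt{2}$ already produces the factorization with the better constant $2K_G$, but can have zero coordinates; the small perturbation $\epsilon$ repairs this at the cost of eating a bit of the $\ell_2^n$-budget, which is why the constant inflates from $2K_G$ to $3K_G$ (with more care, any constant strictly greater than $2K_G$ would do).
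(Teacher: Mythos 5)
Your proof is correct and follows essentially the same route as the paper: apply Grothendieck's factorization, take the coordinatewise maximum of $\Lambda_0$ and $\Lambda_1$, add a small strictly positive perturbation to avoid zero coordinates, and divide by $\sqrt{3}$ to keep the $\ell_2^n$-norm at most $1$ while each quotient diagonal costs a factor $\sqrt{3}$. The only (immaterial) difference is your choice of perturbation $\epsilon_i=\delta\cdot 2^{-i}$ versus the paper's uniform $(\sqrt{3}-\sqrt{2})/\sqrt{n}$.
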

\begin{proof}
	Let $\Lambda_0 = (\lambda_1^0,\dots,\lambda_n^0)$, $\Lambda_1 = (\lambda_1^1,\dots,\lambda_n^1)$ and $B$ be the vectors and matrix from Grothendieck's theorem. Let us define 
	\begin{equation*}
	\lambda_i:=\frac{1}{\sqrt 3}\left(\frac{\sqrt 3-\sqrt 2}{\sqrt n} +\max\{\lambda_i^0, \lambda_i^1\}\right)>0.
	\end{equation*}
	Then, for $\Lambda=(\lambda_1,\dots,\lambda_n)$, it is clear that $\|\Lambda\|_{\ell_2^n}\leq 1$ and that $\|D_\Lambda^{-1} D_{\Lambda_j}\|_{\cL(\ell_2^n,\ell_2^n)}\leq \sqrt 3$. Inequality (\ref{eq:our_GT}) easily follows.  
\end{proof}

The following result provides an asymptotically sharp bound for the constant $c(n)$ from (\ref{eq:BLT_ineq}).
\begin{theorem}\label{thm:main_ineq}
	For any $N\times N$ matrix $A=(a_{ij})_{i,j=1}^N$,
	\begin{equation}\label{eq:main_ineq}
	\sum_{i,j=1}^N |a_{ij}| \leq 3 K_G \cdot \sqrt{\rank A}\cdot \|A\|_{\cL(\ell_\infty^N,\ell_1^N)}.
	\end{equation}
\end{theorem}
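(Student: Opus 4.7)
The plan is to combine Lemma \ref{lem:our_GT}, the rank bound \eqref{eq:norms_rk_ineq}, and an application of Cauchy–Schwarz.

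First, I would apply Lemma \ref{lem:our_GT} to $A$ to produce a vector $\Lambda=(\lambda_1,\dots,\lambda_N)$ with strictly positive entries, $\|\Lambda\|_{\ell_2^N}\le 1$, such that the rescaled matrix $B := D_\Lambda^{-1} A D_\Lambda^{-1}$ satisfies
\[
\|B\|_{\cL(\ell_2^N,\ell_2^N)} \le 3K_G\cdot \|A\|_{\cL(\ell_\infty^N,\ell_1^N)}.
\]
Since the $\lambda_i$ are strictly positive, $D_\Lambda$ is invertible, hence $\rank(B)=\rank(A)$. Applying inequality \eqref{eq:norms_rk_ineq} to $B$ therefore gives
\[
\|B\|_{HS}\le \sqrt{\rank(A)}\cdot \|B\|_{\cL(\ell_2^N,\ell_2^N)} \le 3K_G\sqrt{\rank(A)}\cdot \|A\|_{\cL(\ell_\infty^N,\ell_1^N)}.
\]

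Next, I would recover the entrywise $\ell_1$ sum on the left-hand side. By definition of $B$, we have $a_{ij}=\lambda_i\lambda_j b_{ij}$, so
\[
\sum_{i,j=1}^N |a_{ij}| = \sum_{i,j=1}^N \lambda_i\lambda_j\,|b_{ij}|.
\]
Cauchy–Schwarz on the index set $\{1,\dots,N\}^2$ bounds this by
\[
\Bigl(\sum_{i,j}\lambda_i^2\lambda_j^2\Bigr)^{1/2}\Bigl(\sum_{i,j}b_{ij}^2\Bigr)^{1/2}=\|\Lambda\|_{\ell_2^N}^2\cdot\|B\|_{HS}\le \|B\|_{HS},
\]
using $\|\Lambda\|_{\ell_2^N}\le 1$ and the entrywise formula for $\|B\|_{HS}$.

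Chaining the two displayed bounds yields \eqref{eq:main_ineq}. There is no real obstacle here: the content has been packaged into Lemma \ref{lem:our_GT} and into the observation \eqref{eq:norms_rk_ineq} that Hilbert–Schmidt is controlled by rank times operator norm; the only subtlety worth flagging is the use of strict positivity of the $\lambda_i$ (so that the rescaling preserves the rank), which is precisely why Lemma \ref{lem:our_GT} was formulated with $\lambda_i>0$ rather than merely $\lambda_i\ge 0$.
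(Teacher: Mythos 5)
Your proof is correct and follows essentially the same route as the paper: apply Lemma~\ref{lem:our_GT} to obtain the rescaling $B=D_\Lambda^{-1}AD_\Lambda^{-1}$, use \eqref{eq:norms_rk_ineq} together with $\rank B=\rank A$, and recover $\sum_{i,j}|a_{ij}|$ via Cauchy--Schwarz and $\|\Lambda\|_{\ell_2^N}\le 1$. The only difference is the order in which the two bounds are chained, which is immaterial.
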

\begin{proof}
	Let $\Lambda=(\lambda_1,\dots,\lambda_n)$ be the vector from Lemma~\ref{lem:our_GT} corresponding to the matrix $A$.
	Denoting $B:=D_\Lambda^{-1} A D_\Lambda^{-1}$, $b_{ij}=\frac{1}{\lambda_i\lambda_j}a_{ij}$, we have
	\begin{equation*}
	\sum_{i,j=1}^N|a_{ij}| = \sum_{i,j=1}^N |b_{ij}|\cdot|\lambda_i\lambda_j|. 
	\end{equation*}
	By the Cauchy-Schwarz inequality,
	\begin{eqnarray*}
		\sum_{i,j=1}^N |b_{ij}|\cdot|\lambda_i\lambda_j|&\leq& 
		\left(\sum_{i,j=1}^N b_{ij}^2\right)^{\frac{1}{2}}\cdot \left(\sum_{i,j=1}^N \lambda_i^2\lambda_j^2 \right)^{\frac{1}{2}}\\
		&\leq& \left(\sum_{i,j=1}^N b_{ij}^2\right)^{\frac{1}{2}}=\|B\|_{HS},	
	\end{eqnarray*}
	and by (\ref{eq:norms_rk_ineq}) one has $\|B\|_{HS}\leq \sqrt{\rank B}\cdot \|B\|_{\cL(\ell_2^N,\ell_2^N)}$. Since $B$ is a multiplication of $A$ by invertible matrices, $\rank B=\rank A$. Moreover, by Lemma~\ref{lem:our_GT} the $\cL(\ell_2^N,\ell_2^N)$-norm of $B$ is bounded by $3K_G\cdot \|A\|_{\cL(\ell_\infty^N,\ell_1^N)}$. Overall we obtain
	\begin{equation*}
	\sum_{i,j=1}^N |a_{ij}|\leq \|B\|_{HS}\leq\sqrt{\rank B}\cdot \|B\|_{\cL(\ell_2^N,\ell_2^N)}\leq  \sqrt{\rank A}\cdot 3K_G\cdot \|A\|_{\cL(\ell_\infty^N,\ell_1^N)}
	\end{equation*} 
\end{proof}

The following example shows that the dependence on $\rank A$ in Theorem~\ref{thm:main_ineq} is sharp.
\begin{example} \label{exa:sqrt_sharp}
Let $k\leq N$ and take 
$$
A = \left(\begin{array}{cc}
	U_{k\times k} & 0\\
	0&0
	\end{array}\right)
$$ 
where $U=(u_{ij})_{ij}$ is a $k\times k$ orthogonal matrix whose entries satisfy
\begin{equation}\label{eq:example_assumption}
|u_{ij}|\leq \frac{C}{\sqrt {k}}.
\end{equation}	
For example, when $k=2m$, one can consider the following matrix,  related to the discrete Fourier transform, which is given in a block form by 
$$
U=\frac{1}{\sqrt{m}}\left(\left[\begin{array}{cc} 
\cos(\frac{2\pi j\ell}{m}) & -\sin(\frac{2\pi j\ell}{m})\\
\sin(\frac{2\pi j\ell}{m}) & \ \ \cos(\frac{2\pi j\ell}{m})
\end{array}\right]\right)_{j,\ell=1}^m.
$$
The above matrix satisfies condition (\ref{eq:example_assumption}) for $C=\sqrt 2$.

For any such $A$, the orthogonality of $U$ implies that  $\|A\|_{\cL(\ell_\infty^N,\ell_1^N)}\leq k$ and $\sum_{j,\ell=1}^N a_{j\ell}^2 = k$. Therefore, 
\begin{equation*}
\sum_{j,\ell=1}^N |a_{j\ell}| \geq \frac{\sum_{j,\ell=1}^N a_{j\ell}^2}{\max_{j,\ell}|a_{j\ell}|}\geq \frac{k\sqrt k}{C} \geq \frac{1}{C}\cdot \|A\|_{\cL(\ell_\infty^N,\ell_1^N)}\cdot \sqrt{\rank A} , 
\end{equation*} 
where the middle inequality follows from the assumption (\ref{eq:example_assumption}).
\end{example}

\begin{cor}\label{cor:symp_ineq}
	For any finite collection of vectors $v_1,\dots,v_N$ in $\R^{2n}$, 
	\begin{equation}\label{eq:symp_ineq}
	\sum_{i,j=1}^N |\left<v_i, Jv_j\right>|\leq 3K_G \cdot\sqrt{2n}\cdot \max_{|t_i|,|s_j|\leq 1}
	\left<\sum_{i=1}^N t_iv_i, J\sum_{j=1}^N s_j v_j\right>.
	\end{equation}
\end{cor}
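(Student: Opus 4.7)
The plan is to reduce the statement to a direct application of Theorem~\ref{thm:main_ineq} applied to the Gram-type matrix $A=(a_{ij})_{i,j=1}^N$ with entries
$$a_{ij}:=\left<v_i, Jv_j\right>.$$
Two identifications are needed: a bound on $\rank A$, and a reinterpretation of $\|A\|_{\cL(\ell_\infty^N,\ell_1^N)}$ as the right-hand side of (\ref{eq:symp_ineq}).

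First I would write $A$ in a factored form. Let $V$ be the $2n\times N$ matrix whose columns are $v_1,\dots,v_N$. Then, entry-wise, $A=V^T J V$, so $\rank A \leq \rank V \leq 2n$. This will allow me to replace $\sqrt{\rank A}$ in the conclusion of Theorem~\ref{thm:main_ineq} by $\sqrt{2n}$.

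Next I would unravel the $\ell_\infty^N\to\ell_1^N$ norm via the standard duality between $\ell_1$ and $\ell_\infty$:
\begin{equation*}
\|A\|_{\cL(\ell_\infty^N,\ell_1^N)}
=\max_{\|s\|_{\ell_\infty^N}\leq 1}\|As\|_{\ell_1^N}
=\max_{\|s\|_{\ell_\infty^N}, \|t\|_{\ell_\infty^N}\leq 1}\sum_{i,j=1}^N t_i a_{ij} s_j.
\end{equation*}
Substituting $a_{ij}=\left<v_i, Jv_j\right>$ and using bilinearity gives
\begin{equation*}
\sum_{i,j=1}^N t_i s_j \left<v_i, Jv_j\right>
=\left<\sum_{i=1}^N t_i v_i,\ J\sum_{j=1}^N s_j v_j\right>,
\end{equation*}
so $\|A\|_{\cL(\ell_\infty^N,\ell_1^N)}$ equals exactly the maximum appearing on the right-hand side of (\ref{eq:symp_ineq}).

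Putting these two observations into Theorem~\ref{thm:main_ineq} immediately yields the desired inequality. There is no real obstacle here; the corollary is essentially a translation of Theorem~\ref{thm:main_ineq} into the symplectic language, the only non-cosmetic input being the rank bound $\rank(V^TJV)\leq 2n$.
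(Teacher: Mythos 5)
Your proposal is correct and follows essentially the same route as the paper: apply Theorem~\ref{thm:main_ineq} to $A=(\left<v_i,Jv_j\right>)_{i,j}$, bound $\rank A\leq 2n$ via the factorization $A=V^TJV$, and identify $\|A\|_{\cL(\ell_\infty^N,\ell_1^N)}$ with the bilinear maximum on the right-hand side. The only difference is that you spell out the duality computation that the paper leaves implicit.
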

\begin{proof}
	Consider the matrix $A=(a_{ij})_{i,j=1}^N$ defined by $a_{ij}:=\left<v_i, Jv_j\right>$. Then, $\rank A\leq 2n$ and 
	\begin{equation*}
	\|A\|_{\cL(\ell_\infty^N,\ell_1^N)} = \max_{|t_i|,|s_j|\leq 1}
	\sum_{i,j=1}^Nt_i s_j\left< v_i, Jv_j\right>.
	\end{equation*}
	Applying Theorem~\ref{thm:main_ineq} to the matrix $A$ gives the desired inequality.
\end{proof}

The next result gives an upper-bound for the infimum of the sum of norms of vectors $v_1,\dots, v_N\in \R^{2n}$ under the action of $\Sp(2n)$, by means of the rank and the $\cL(\ell_\infty^N, \ell_1^N)$-norm of the matrix $(\left<v_i, Jv_j\right>)_{i,j=1}^N$. We remark that this matrix is invariant under the action of $\Sp(2n)$ on the vectors $\{v_i\}_{i=1}^N$ (as follows easily from (\ref{eq:symp_matrix})).
\begin{theorem}\label{thm:norm_under_Sp}
	Let $v_1,\dots,v_N\in\R^{2n}$ and consider the $N\times N$ matrix defined by $A:=\left(\left<v_i, Jv_j\right>\right)_{i,j=1}^N$. Then, 
	\begin{equation}\label{eq:norm_under_Sp}
	\inf_{S\in\Sp(2n)}\left(\sum_{i=1}^N \|Sv_i\|_{\ell_2^{2n}}\right)^2\leq 3K_G\cdot\rank A\cdot \|A\|_{\cL(\ell_\infty^N,\ell_1^N)}.
	\end{equation}
\end{theorem}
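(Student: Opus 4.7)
The plan is to combine the Grothendieck-based rescaling used in Theorem~\ref{thm:main_ineq} with Williamson's symplectic diagonalization theorem.

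First, apply Lemma~\ref{lem:our_GT} to obtain $\Lambda=(\lambda_1,\ldots,\lambda_N)$ with $\lambda_i>0$ and $\|\Lambda\|_{\ell_2^N}\leq 1$ such that the rescaled matrix $B:=D_\Lambda^{-1}AD_\Lambda^{-1}$ satisfies $\|B\|_{\cL(\ell_2^N,\ell_2^N)}\leq 3K_G\|A\|_{\cL(\ell_\infty^N,\ell_1^N)}$. Setting $w_i:=v_i/\lambda_i$, we have $B_{ij}=\left<w_i,Jw_j\right>$ and $\rank B=\rank A$. For every $S\in\Sp(2n)$, the Cauchy--Schwarz inequality together with $\|\Lambda\|_{\ell_2^N}\leq 1$ yields
\[
\sum_{i=1}^N\|Sv_i\|_{\ell_2^{2n}} \;=\; \sum_{i=1}^N\lambda_i\|Sw_i\|_{\ell_2^{2n}} \;\leq\; \Bigl(\sum_{i=1}^N\|Sw_i\|_{\ell_2^{2n}}^2\Bigr)^{1/2},
\]
so it suffices to establish the symplectic-invariant estimate
\begin{equation}\label{eq:plan_key}
\inf_{S\in\Sp(2n)}\sum_{i=1}^N\|Sw_i\|_{\ell_2^{2n}}^2 \;\leq\; \rank(B)\cdot\|B\|_{\cL(\ell_2^N,\ell_2^N)}.
\end{equation}

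Arranging the $w_i$'s as columns of a $2n\times N$ matrix $W$, the left-hand side of \eqref{eq:plan_key} equals $\inf_S\|SW\|_{HS}^2 = \inf_S\Tr(SMS^T)$, where $M:=WW^T$. When $M$ is positive definite, Williamson's theorem furnishes $T\in\Sp(2n)$ with $T^T M T = \mathrm{diag}(d_1,\ldots,d_n,d_1,\ldots,d_n)$ for symplectic eigenvalues $d_k>0$; the choice $S:=T^T$ lies in $\Sp(2n)$ (as the group is closed under transposition) and satisfies $\Tr(SMS^T)=2\sum_k d_k$. A standard manipulation shows that $JM$ is similar via $M^{1/2}$ to the skew-symmetric matrix $M^{1/2}JM^{1/2}$, whose eigenvalues are $\pm i d_k$; combined with the fact that $B=W^TJW$ and $JWW^T=JM$ share identical nonzero eigenvalues, the nonzero singular values of the skew-symmetric $B$ turn out to be exactly $d_1,d_1,\ldots,d_n,d_n$. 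Consequently $2\sum_k d_k$ is the nuclear norm of $B$, which is trivially at most $\rank(B)\cdot\|B\|_{\cL(\ell_2^N,\ell_2^N)}$, proving \eqref{eq:plan_key} when $M$ is nondegenerate.

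To handle the degenerate case, the strategy will be to perturb: replace $M$ by $M_\varepsilon := M + \varepsilon\id$ ($\varepsilon>0$) and let $S_\varepsilon$ denote the symplectic matrix produced by Williamson applied to $M_\varepsilon$, so that $\Tr(S_\varepsilon M_\varepsilon S_\varepsilon^T) = 2\sum_k d_k^{(\varepsilon)}$. Since $\Tr(S_\varepsilon M S_\varepsilon^T)\leq\Tr(S_\varepsilon M_\varepsilon S_\varepsilon^T)$, and since continuity of the spectrum of $JM_\varepsilon$ gives $\sum_k d_k^{(\varepsilon)}\to \tfrac{1}{2}\cdot(\text{sum of singular values of }B)$ as $\varepsilon\to 0^+$, the estimate \eqref{eq:plan_key} persists in full generality. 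Combining \eqref{eq:plan_key} with $\rank B=\rank A$ and the Grothendieck bound on $\|B\|_{\cL(\ell_2^N,\ell_2^N)}$ then produces exactly \eqref{eq:norm_under_Sp}. The main obstacle will be the identification of the symplectic eigenvalues of $M$ with the singular values of $B$ via Williamson's theorem, together with verifying the continuity argument in the degenerate case; both are classical but need to be spelled out carefully.
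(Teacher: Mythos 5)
Your argument is correct, and it reaches the bound by a genuinely different route than the paper. The paper stays on the $\R^N$ side: it takes the spectral normal form of the rescaled skew-symmetric matrix $B=D_\Lambda^{-1}AD_\Lambda^{-1}=Q^TRQ$, manufactures from it an explicit $2n\times N$ matrix $W$ with $W^TJW=A$, proves $\ker V=\ker W$ so that $W=SV$ with $S\in\Sp(2n)$, and only then applies Cauchy--Schwarz to $\sum\|We_i\|$; the rank-deficient case is handled separately by a linear symplectic reduction of $E=\Span\{v_i\}$ onto a symplectic subspace $E_0$, followed by the $S_\varepsilon=S_0\pi_0+\varepsilon\pi_1+\varepsilon^{-1}\pi_2+\pi_3$ trick. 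You instead apply Cauchy--Schwarz first to decouple $\Lambda$, pass to $M=WW^T$ on the $\R^{2n}$ side, and invoke Williamson's theorem to identify $\inf_{S\in\Sp(2n)}\Tr(SMS^T)$ with $2\sum_k d_k$, which you correctly recognize as the nuclear norm of $B$ via the eigenvalue identity for $W^T(JW)$ versus $(JW)W^T$; the bound $\sum_j s_j(B)\leq \rank(B)\cdot s_0(B)$ then plays the role of (\ref{eq:norms_rk_ineq}). The two proofs rest on the same normal-form fact (Williamson's theorem is essentially the paper's factorization $R=D_MP^TJPD_M$ read on the other side), but yours makes transparent that the quantity being controlled is exactly the trace norm of $B$, and your $M_\varepsilon=M+\varepsilon\id$ perturbation disposes of the degenerate case more uniformly than the paper's symplectic reduction; the price is reliance on Williamson's theorem and on the identification of symplectic eigenvalues with the moduli of the spectrum of $JM$, both classical but external to the paper, whereas the paper's construction is self-contained and exhibits the optimal-up-to-constant $S$ explicitly. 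All the steps you flag as needing care (the spectral identification, $T^T\in\Sp(2n)$, and the continuity of $\sum_k d_k^{(\varepsilon)}$ as $\varepsilon\to 0^+$) do go through.
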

\begin{proof}
	By homogeneity, we may assume that $\|A\|_{\cL(\ell_\infty^N,\ell_1^N)}\leq 1$. Denoting by $V$ the $2n\times N$ matrix whose columns are the vectors $\{v_i\}_{i=1}^N$, we can write
	\begin{equation}\label{eq:A_vs_J}
	A=V^TJ_{2n}V.
	\end{equation}
	We split the proof into cases, with respect to the rank of $A$:
	\begin{enumerate}
		\item[Case 1:] Assume that $\rank A = 2n$, then $V$ must be of full rank, which means that the vectors $\{v_i\}_{i=1}^N$ span $\R^{2n}$. By Lemma~\ref{lem:our_GT}, there exists a vector $\Lambda=(\lambda_1,\dots,\lambda_N)\in\R^N$ such that $\lambda_i>0$ for all $i$, $\|\Lambda\|_{\ell_2^N}\leq1$, and such that the matrix $B:=D_\Lambda^{-1}AD_\Lambda^{-1}$ satisfies $\|B\|_{\cL(\ell_2^N,\ell_2^N)}\leq 3K_G$. 
		Since $J$ is an anti-symmetric operator, it follows from (\ref{eq:A_vs_J}) that
		\begin{equation*}
		B^T=D_\Lambda^{-1}V^TJ_{2n}^T V D_\Lambda^{-1} = -B,
		\end{equation*}
		namely, $B$ is an anti-symmetric matrix of rank $2n$. By the spectral theorem there exists a $2n\times N$ matrix $Q$, which is a part of an $N\times N$ orthogonal matrix, such that $B=Q^TRQ$, where $R$ is a $2n\times 2n$ matrix of the form 
		\begin{equation*}
		R=\left(\begin{array}{ccccc}
		0 & -\mu_1 & & & \\
		\mu_1 & 0 & & &\\
		& & \ddots & &\\
		& & & 0 & -\mu_n \\
		& & & \mu_n & 0 
		\end{array}\right),
		\end{equation*}
		for some $0<\mu_i\leq 3K_G$.
		Denoting $$
		M:=(\sqrt{\mu_1},\sqrt{\mu_1},\sqrt{\mu_2},\sqrt{\mu_2},\dots,\sqrt{\mu_n},\sqrt{\mu_n})\in\R^{2n},$$
		we have $R=D_{{M}}R_0D_{{M}}$ where
		\begin{equation*}
		R_0=\left(\begin{array}{ccccc}
		0 & -1 & & & \\
		1 & 0 & & &\\
		& & \ddots & &\\
		& & & 0 & -1 \\
		& & & 1 & 0 
		\end{array}\right),
		\end{equation*}
		and is equal, up to a change of order of the basis elements, to $J$. Namely, $R_0 = P^TJ P$, where $P$ is a permutation matrix.
		We conclude that 
		\begin{equation*}
		A = D_\Lambda B D_\Lambda = W^T J W,
		\end{equation*}
		where $W$ is the $2n\times N$ matrix defined by
		\begin{equation*}
		W:= P\cdot D_{{M}}\cdot Q\cdot D_\Lambda.
		\end{equation*}
		By (\ref{eq:A_vs_J}) we obtain
		\begin{equation}\label{eq:V_vs_W}
		V^TJV=A=W^TJW.
		\end{equation}
		Let us show that $\ker V=\ker W$. Indeed, $v\in \ker V$ if and only if for any $u\in \R^{2n}$, $\left<u,Vv\right>=0$. Since $J$ is invertible, this is equivalent to $\left<u,JVv\right>=0$ for all $u\in \R^{2n}$. Moreover, $V$ is of full rank and so its image is $\R^{2n}$. Therefore, the latter condition is equivalent to $\left<w,V^TJVv\right>=\left<Vw,JVv\right>=0$ for all $w\in\R^N$. We conclude that $\ker V=\ker V^T JV$. Arguing the same for $W$ and using (\ref{eq:V_vs_W}) yields $\ker V=\ker W$. 
		
		Now, as both matrices $V,W$ are of full rank and have the same kernel, there exists a $2n\times 2n$ matrix $S$ such that 
		\begin{equation*}
		W=SV.
		\end{equation*}
		Plugging this back in (\ref{eq:V_vs_W}) yields
		\begin{equation*}
		V^T S^T J S V = V^T J V,
		\end{equation*}
		which implies (since $V$ is of full rank) that $S^TJS = J$ and hence $S\in\Sp(2n)$.
		Finally, let us bound the sum of norms of the vectors $\{Sv_i\}_{i=1}^N$. Denoting by $\{e_i\}_{i=1}^N$ the standard basis of $\R^N$, we have $Sv_i=We_i$. In addition, 
		\begin{eqnarray*}
			\|We_i\|_{\ell_2^{2n}}&=& \|P\cdot D_M\cdot Q\cdot D_\Lambda e_i\|_{\ell_2^{2n}}\\
			&\leq & \| P\|_{\cL(\ell_2^{2n},\ell_2^{2n})}\cdot \|D_{{M}}\|_{\cL(\ell_2^{2n},\ell_2^{2n})} \cdot \|Qe_i\|_{\ell_2^{2n}}\cdot |\lambda_i|.
		\end{eqnarray*}
		Since $P$ is an orthogonal matrix, $\| P\|_{\cL(\ell_2^{2n}, \ell_2^{2n})}= 1$. In addition, 
		$$\|D_{{M}}\|_{\cL(\ell_2^{2n},\ell_2^{2n})} = \max_i \sqrt{\mu_i} \leq \sqrt{3K_G}.$$ 
		Therefore,
		\begin{eqnarray*}
			\sum_{i=1}^N\|Sv_i\|_{\ell_2^{2n}} &\leq& \sqrt{3K_G}\cdot \sum_{i=1}^N \|Qe_i\|_{\ell_2^{2n}}\cdot |\lambda_i|\\
			&\leq& \sqrt{3K_G}\cdot \left(\sum_{i=1}^N \|Qe_i\|_{\ell_2^{2n}}^2\right)^\frac{1}{2}\cdot \left(\sum_{i=1}^N \lambda_i^2\right)^\frac{1}{2}\\
			&\leq& \sqrt{3K_G}\cdot \left(\sum_{i=1}^N \|Qe_i\|_{\ell_2^{2n}}^2\right)^\frac{1}{2} \\
			&\leq& \sqrt{3K_G}\cdot \sqrt{2n},			
		\end{eqnarray*}
		where the last inequality follows from the fact that $Q$ is a part of an $N\times N$ orthogonal matrix. Since $2n=\rank A$, this concludes the proof of the theorem for this case.
		
		\item[Case 2:] Assume that $\rank A<2n$. Set $E:=\Span\{v_1,\dots, v_N\}\subset\R^{2n}$. Denote by $ \ell $ the dimension of the kernel of the restriction of the
		bilinear form $\left<\cdot ,J\cdot\right>$ to $ E $. By a well known fact from symplectic linear algebra, $ dim E - \ell =: 2k $ is even, we 
		have $ k + \ell \leqslant n $, and moreover there exists a linear 
		symplectic matrix $T\in \Sp(2n)$ such that $ T(E) = span\{e_1,\dots,e_k,e_{n+1},\ldots, e_{n+k+\ell}\}$. Since both sides of the inequality (\ref{thm:norm_under_Sp})
		are invariant under the action of $ \Sp(2n) $ on vectors $ v_1, \ldots, v_N $, we may assume without loss of generality that 
		$$ E = span\{e_1,\dots,e_k,e_{n+1},\ldots, e_{n+k+\ell}\} .$$ Denote $$ E_0 = span\{e_1,\dots,e_k,e_{n+1},\ldots, e_{n+k}\},$$ $$ E_1 = span\{e_{n+k+1},\ldots, e_{n+k+\ell}\},$$
		$$ E_2 = span\{e_{k+1},\ldots, e_{k+\ell}\} ,$$ $$ E_3 = span\{e_{k+\ell+1},\dots,e_n,e_{n+k+\ell+1},\ldots, e_{2n}\} .$$ We have the orthogonal decompositions 
		$ E = E_0\oplus E_1$ and $\R^{2n} = E_0\oplus E_1\oplus E_2\oplus E_3$. 
		Consider  the orthogonal projections $\pi_i:\R^{2n}\rightarrow E_i$ for $i=0,1,2,3$. 
		For any $u,v\in E$ we have 
		\begin{equation*}
		\left<v, Ju\right> = \left<\pi_0v, J\pi_0 u\right>.
		\end{equation*}
		Therefore, the matrix $A$ defined by $a_{ij} = \left<v_i, Jv_j\right>$ does not change when we replace $\{v_i\}$ by $\{\pi_0v_i\}$. Thus
		$$
		\rank A = \rank \left<\cdot, J\cdot\right>|_{E_0} = \dim E_0 = 2k
		$$ and we may apply Case 1 to the vectors $\{\pi_0 v_i\}$ in $(E_0, J|_{E_0})$. We conclude that there exists a matrix $S_0:E_0\rightarrow E_0$ such that $S_0^TJ|_{E_0} S_0 =  J|_{E_0} $ and
		\begin{equation*}
		\sum_{i=1}^N\|S_0\pi_0v_i\|_{\ell_2^{2k}} \leq \sqrt{3K_G}\cdot \sqrt{2k}.
		\end{equation*}
		For any given $\varepsilon>0$ consider the operator
		\begin{equation*}
		S_\varepsilon:= 
		S_0 \pi_0 + \varepsilon \pi_1+\frac{1}{\varepsilon}\pi_2+ \pi_3.
		\end{equation*}
		One can check that $S_\varepsilon$ 
		satisfies relation (\ref{eq:symp_matrix}) and therefore belongs to $\Sp(2n)$. Finally, the sum of norms of $\{S_\varepsilon v_i\}$ is bounded as follows: 
		\begin{eqnarray*}
			\sum_{i=1}^N \|S_\varepsilon v_i\|_{\ell_2^{2n}} &=& \sum_{i=1}^N \|S_\varepsilon \pi_0v_i + S_\varepsilon \pi_1 v_i\|_{\ell_2^{2n}}\\
			&\leq& \sum_{i=1}^N \|S_0 \pi_0v_i \|_{\ell_2^{2n}} + \sum_{i=1}^N \|\varepsilon\cdot 
			\pi_1 v_i\|_{\ell_2^{2n}}\\ 
			&\leq& \sqrt{3K_G}\cdot \sqrt{2k} + \varepsilon\sum_{i=1}^N\|v_i\|_{\ell_2^{2n}}.
		\end{eqnarray*}
		Taking $\varepsilon\rightarrow0$ we conclude that 
		\begin{equation*}
		\inf_{S\in\Sp(2n)} \sum_{i=1}^N \|S v_i\|_{\ell_2^{2n}}  \leq  \sqrt{3K_G}\cdot \sqrt{2k} = \sqrt{3K_G}\cdot \sqrt{\rank A}.
		\end{equation*} 	
	\end{enumerate}
\end{proof}

\begin{example}
	Let us show that for any $k\leq 2n$ there exist vectors $v_1,\dots,v_N\in\R^{2n}$ with $\dim span\{v_1,\dots, v_N\}\leq k$ such that for any matrix $S\in\Sp(2n)$,
	\begin{equation*}
	\left(\sum_{j=1}^N \|Sv_j\|_{\ell_2^{2n}}\right)^2\geq C\cdot k\cdot \|A\|_{\cL(\ell_\infty^N,\ell_1^N)},
	\end{equation*}
	where $A:=\left(\left<v_i, J_{2n}v_j\right>\right)_{i,j=1}^N$. Notice that it is enough to consider the case where $k=2m$ is even. In this case, take the vectors $v_j$ to be zero for $j>k=2m$ and $\{v_1,\dots,v_{2m}\} = \{e_1, \dots,e_m, e_{n+1},\dots ,e_{n+m}\}$. The corresponding matrix is
	$$
	A=\left(\begin{array}{cc}
	J_{2m}&0\\0&0
	\end{array}\right)
	$$ 
	and $\left(\sum_{j=1}^N\|v_j\|_{\ell_2^{2n}}\right)^2 = k^2=\rank A\cdot \|A\|_{\cL(\ell_\infty^N,\ell_1^N)}$. Let $S\in\Sp(2n)$ and set $w_j:= S v_j$ for all $1\leq j\leq N$. Denote by $W$ the $2n\times N$ matrix whose columns are the vectors $w_j$. Then, it follows from (\ref{eq:symp_matrix}) that $A = W^T J_{2n} W$. 
	Let $\tilde W $ be the $2n\times 2m$ matrix whose columns are the first $2m$ columns of $W$, then $\tilde W^T J_{2n} \tilde W = J_{2m}$. 
	
	Given a $k\times\ell$ matrix $P$ whose columns are the vectors $\{p_1,\dots,p_\ell\}\subset \R^k$, we denote by $\Pi(P):=\prod_{i=1}^\ell\|p_i\|_{\ell_2^{k}}$ the product of the Euclidean norms of the columns of $P$. A generalized version of Hadamard's inequality states that for any pair of $k\times \ell$ matrices $P$ and $Q$, the determinant of $P^TQ$ is bounded by the product of $\ell_2$-norms of the columns of $P$ and $Q$, $\det(P^T Q)\leq \Pi(P)\cdot \Pi(Q)$. Applying this to $P=\tilde W$, $Q=J_{2n}\tilde W$, we have 	 
	\begin{equation*}
	1 = \det(J_{2m}) = \det(\tilde W^T \cdot J_{2n}\tilde W)\leq \Pi(\tilde W)\cdot \Pi(J_{2n}\tilde W) = \Pi(\tilde W)^2.
	\end{equation*}
	Using the inequality of arithmetic and geometric means we conclude that 
	$$
	1\leq \prod_{j=1}^{2m}\|w_j\|_{\ell_2^{2n}} \leq \frac{1}{2m}\sum_{j=1}^{2m} \|w_j\|_{\ell_2^{2n}},
	$$ 
	and so 
	$$
	\sum_{j=1}^{N} \|w_j\|_{\ell_2^{2n}} = \sum_{j=1}^{2m} \|w_j\|_{\ell_2^{2n}}\geq 2m=k=\sqrt{\rank A\cdot \|A\|_{\cL(\ell_\infty^N,\ell_1^N)}}.
	$$ 
\end{example}
\begin{rem}
	In the above example we actually proved the following stronger statement: For any collection of vectors $v_1,\dots,v_N\in \R^{2n}$ that satisfy \begin{equation*}
	V^T J_{2n} V = \left(\begin{array}{cc}
	J_{2m}&0\\0&0
	\end{array}\right)=:A
	\end{equation*}
	for some $m\leq n$, where $V$ is the matrix whose columns are $\{v_j\}_{j=1}^N$, we have
	\begin{equation}
	\sum_{j=1}^N\|v_j\|_{\ell_2^N} \geq 2m = \sqrt{\rank(A)\cdot\|A\|_{\cL(\ell_\infty^N,\ell_1^N)} }.
	\end{equation}
\end{rem}

\subsection*{Acknowledgements.}
The authors are grateful to the Creator for giving them the understanding presented in this paper. We also thank Lev Buhovsky for useful discussions and comments.
Shira Tanny extends her special thanks to Lev Buhovsky and Leonid Polterovich for their mentorship and guidance. 

S.T. was partially supported by ISF Grant 2026/17 and by the Levtzion Scholarship.

\bibliographystyle{abbrv}
\bibliography{refs}
\vspace*{1cm}

\newpage

\paragraph{Efim Gluskin,}$ $\\ 
School of Mathematical Sciences\\ 
Tel Aviv University \\
Ramat Aviv, Tel Aviv 69978\\
Israel\\
E-mail: gluskin@tauex.tau.ac.il

\paragraph{Shira Tanny,}$ $\\
School of Mathematical Sciences\\ 
Tel Aviv University \\
Ramat Aviv, Tel Aviv 69978\\
Israel\\
E-mail: tanny.shira@gmail.com

\end{document}